\documentclass[12pt,leqno]{amsart}
\usepackage{amsmath,amssymb,amsfonts}
\usepackage{eucal, enumerate}

\pagestyle{plain}                     % makes page numbers appear
\setlength{\textwidth}{6.5in}
\setlength{\oddsidemargin}{0.0in}
\setlength{\evensidemargin}{0.0in}
\setlength{\textheight}{9in}

\newtheorem{lem}{Lemma}
\newtheorem{cor}[lem]{Corollary}
\newtheorem{thm}[lem]{Theorem}
\theoremstyle{definition}
\newtheorem{defn}{Definition}

\renewcommand{\phi}{\varphi}                 % Personal preferences.
\renewcommand{\epsilon}{\varepsilon}

\newcommand\G{\Gamma}

\begin{document}

\begin{center}
{\large\sc Characterization of Line-Consistent Signed Graphs}\\[10pt]
by\\[10pt]
Daniel C.\ Slilaty \\
Wright State University\\
Dayton, OH 45435-0001, U.S.A.\\
{\tt daniel.slilaty@wright.edu}
\\[10pt]
and \\[10pt]
Thomas Zaslavsky\\
Binghamton University (SUNY)\\
Binghamton, NY 13902-6000, U.S.A.\\
{\tt zaslav@math.binghamton.edu} 
\\[10pt]
\today
\end{center}
\bigskip\bigskip\bigskip

{\small 
\emph{Abstract.}  
The line graph of a graph with signed edges carries vertex signs.  A vertex-signed graph is \emph{consistent} if every circle (cycle, circuit) has positive vertex-sign product.  Acharya, Acharya, and Sinha recently characterized \emph{line-consistent} signed graphs, i.e., edge-signed graphs whose line graphs, with the naturally induced vertex signature, are consistent.  Their proof applies Hoede's relatively difficult characterization of consistent vertex-signed graphs.  We give a simple proof that does not depend on Hoede's theorem as well as a structural description of line-consistent signed graphs.
\bigskip

\emph{Mathematics Subject Classifications} (2010): Primary 05C22; Secondary 05C76.
% 05C76 Graph operations (line graphs, etc.)
\bigskip

\emph{Keywords}:  line-consistent signed graph, line graph, consistent vertex-signed graph, consistent marked graph.
}

\bigskip\bigskip\bigskip

%\section{Introduction}

In the first article on signed graphs---graphs whose edges are labelled positive or negative---Harary \cite{NB} gave a simple characterization of those in which the product of signs around every circle (i.e., circuit, cycle) is positive.  (Such graphs are called \emph{balanced}.)  Later, Beineke and Harary \cite{BH} introduced signed vertices and asked the analogous question of characterizing the vertex-signed graphs (also called \emph{marked graphs}) in which the product of vertex signs around every circle is positive.  (These vertex-signed graphs are called \emph{consistent}.)  This was more difficult.  After preliminary results by Acharya \cite{A,A2} and Rao \cite{R}, Hoede found a definitive answer \cite{H}, which was developed more deeply in \cite{RX}.  
(An even more definitive answer was found subsequently; see \cite{JSD}.)

An obvious question was never answered until recently.  If a signed graph $\Sigma$ has underlying graph $\G$ and edge signature $\sigma$, then the line graph $L(\G)$ has $\sigma$ as a vertex signature.  Under what conditions is this vertex signature consistent?  We call such a graph \emph{line consistent}, as the defining property is consistency of the line graph.  Acharya, Acharya, and Sinha \cite{AAS} found a simple necessary and sufficient condition for line consistency.  The necessity of their condition is easy to determine.  Sufficiency is not so easy; it depends on Hoede's relatively complex consistency criterion.  Here we give an elementary, short proof of sufficiency as well as a direct structural description of the signed graphs whose line graphs are consistent.  
(The related paper \cite{CNV} gives constructions for line-consistent signed graphs that illuminate more about their structure.)

%\section{Background}

A graph may have multiple edges but not loops.  A \emph{simple} graph has neither loops nor multiple edges.  
The \emph{degree} $d(v)$ of a vertex is the number of edges incident with $v$ (and also the number of neighbors of $v$ if the graph is simple).  
The length of a path $P$ is denoted by $l(P)$; if it is zero the path is \emph{trivial}.  
\emph{Suppressing} a divalent vertex means replacing it and its two incident edges by a single edge.  
Two graphs are \emph{homeomorphic} if they are isomorphic or they can both be changed into the same (unlabelled) graph by suppressing divalent vertices in one or both of them.
When $e,e'$ are parallel edges in $\G$, the line graph $L(\G)$ has a double edge between its vertices $e$ and $e'$.  

In a signed graph $\Sigma = (\G,\sigma)$, the sign of a circle $C$, written $\sigma(C)$, is the product of its edge signs.  Similarly, the sign $\sigma(W)$ of a path or of any walk $W=e_1e_2\cdots e_l$ is $\sigma(W):=\sigma(e_1)\sigma(e_2)\cdots\sigma(e_l)$.  
A vertex is \emph{totally positive} (or, \emph{totally negative}) if all incident edges are positive (or, negative).  
The \emph{negative subgraph} of $\Sigma$ is the spanning subgraph $\Sigma^-$ whose edges are the negative edges of $\Sigma$.  

The \emph{line graph of a signed graph} $\Sigma$, written $L_\sigma(\Sigma)$, is defined as the vertex-signed graph $(L(\G),\sigma)$ whose underlying graph is $L(\G)$, the line graph of $\G$, and whose vertices are marked by the sign function $\sigma$ of $\Sigma$.  (Other notions of line graph of a signed graph exist, in which edges are signed instead of vertices, but they are not related to this work.)

\begin{defn}\label{D:line-consistency}
A signed graph $\Sigma$ is called \emph{line consistent} if $L_\sigma(\Sigma)$ is consistent.  (This is the same as ``$S$-consistency'' in \cite{AAS}, where $\Sigma$ is called $S$.)
\end{defn}

The main result of \cite{AAS}, Theorem 2.1 (when corrected by revising the first line of part (2) to read ``for every vertex $v_i$ \ldots in $S$ such that $d(v_i) \geq 3$,'' as the authors obviously intended), applies to simple graphs with edge signs.  It states:

\begin{thm}[{\cite[Theorem 2.1]{AAS}}]\label{T:AAS}
A signed simple graph $\Sigma$ is line consistent if and only if it is balanced, every vertex of degree $d(v)>3$ is totally positive, and each vertex of degree $d(v)=3$ is either totally positive or has two negative edges which belong to all circles through the vertex.
\end{thm}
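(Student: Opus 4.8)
The plan is to translate line-consistency into a condition about the circles of $L(\G)$ and the negative edges of $\G$. Since the vertices of $L(\G)$ are the edges of $\G$ and each is marked by $\sigma$, the sign of a circle $Z=e_1e_2\cdots e_k$ of $L(\G)$ is simply $\prod_i\sigma(e_i)$. Thus $\Sigma$ is line consistent if and only if every circle of $L(\G)$ passes through an even number of negative edges of $\G$, and I would argue both directions in this language.

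For necessity I would exhibit three families of circles. First, the edge sequence of any circle $C$ of $\G$ is itself a circle of $L(\G)$ of sign $\sigma(C)$, so consistency forces $\sigma(C)=+1$ and hence balance. Second, any three edges incident with a common vertex $v$ are pairwise adjacent in $L(\G)$ and form a triangle; requiring every such triangle to be positive means every triple of edges at $v$ has an even number of negatives, and an elementary case check shows this forces total positivity when $d(v)>3$ and forces either total positivity or exactly two negative edges when $d(v)=3$. Third, to get the extra degree-$3$ condition, suppose $v$ has negative edges $f_1,f_2$ and positive edge $f_3$; if $f_3$ lay on a circle $C$ through $v$, then inserting $f_2$ into the $L(\G)$-circle $C$ between the two edges of $C$ at $v$ yields a circle of sign $\sigma(C)\,\sigma(f_2)=-1$, a contradiction. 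Hence $f_3$ is a bridge and both negative edges lie on every circle through $v$.

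Sufficiency is the heart of the matter. Given an arbitrary circle $Z=e_1\cdots e_k$ of $L(\G)$, let $v_i$ be the vertex shared by $e_i$ and $e_{i+1}$ (indices mod $k$), so $Z$ is a closed walk in $\G$ on the distinct edges $e_i$, with $e_i$ joining $v_{i-1}$ to $v_i$. I would call $e_i$ \emph{proper} if $v_{i-1}\neq v_i$ and \emph{pendant} if $v_{i-1}=v_i$, and factor the sign as $\sigma(Z)=\bigl(\prod_{\text{proper }e}\sigma(e)\bigr)\bigl(\prod_{\text{pendant }e}\sigma(e)\bigr)$. Deleting the pendant steps leaves a closed walk on distinct proper edges, i.e.\ a closed trail, whose edge set decomposes into edge-disjoint circles; by balance each is positive, so the proper factor equals $+1$. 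It then remains to show the pendant factor is $+1$.

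The pendant factor is where I expect the real difficulty. I would group the pendant edges by their host vertex $v$ (the common value $v_{i-1}=v_i$) and show each group has an even number of negatives. A pendant edge at $v$ has both of its circle-neighbors at $v$ as well, so three edges of $Z$ meet $v$ and $d(v)\ge 3$; if $d(v)>3$ or $v$ is totally positive the group is all positive. The delicate case is a degree-$3$ vertex with negative edges $f_1,f_2$ and positive bridge $f_3$: then all three edges at $v$ appear and, since a pendant edge needs both neighbors at $v$, they occur consecutively in $Z$. The main obstacle is to use the bridge property here: a proper occurrence of $f_3$ would cross the bridge with no way for the circle to return, so $f_3$ cannot be proper, forcing $f_3$ to be the pendant edge unless $Z$ is the whole triangle $f_1f_2f_3$. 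Either way the pendant group at $v$ is $\{f_3\}$ or $\{f_1,f_2,f_3\}$, each with an even number of negatives, so the pendant factor is $+1$. Multiplying the two factors gives $\sigma(Z)=+1$ for every circle $Z$, which proves $L_\sigma(\Sigma)$ is consistent.
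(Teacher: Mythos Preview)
Your proposal is correct and follows essentially the same approach as the paper's proof of the equivalent Theorem~\ref{T:characterization}(i). For necessity you use exactly the paper's devices: vertex triangles at $v$ force the degree conditions, and interposing the third edge at a non-totally-positive degree-$3$ vertex into $L(C)$ yields a negative line-graph circle unless the positive edge is an isthmus. For sufficiency your proper/pendant factorization is a global repackaging of the paper's induction: the paper strips one ``pendant'' edge $e_i$ (three consecutive edges at a vertex) at a time, showing the middle edge cannot be negative via the same bridge argument you use, until no three consecutive edges share a vertex and the residual circle is $L(W)$ for a closed walk $W$, positive by balance---which is exactly your proper factor handled by decomposing the closed trail into circles. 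The only difference is bookkeeping (induction on length versus simultaneous removal of all pendant steps); the key idea, that the isthmus property forces $f_3$ to be the middle edge of the consecutive triple, is identical.
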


%\section{Characterizations}

We reformulate Theorem \ref{T:AAS} in a simpler way that leads to a short proof, we add a second and third criterion for line consistency, and we generalize by allowing the underlying graph to have multiple edges.  An \emph{isthmus} (called by some a ``bridge'') is an edge whose deletion raises the number of connected components; equivalently, it is an edge that does not belong to any circle.

\begin{thm}\label{T:characterization}
Each of the following conditions on a signed graph $\Sigma$ without loops is necessary and sufficient for it to be line consistent: 
\begin{enumerate}[{\rm(i)}]
\item $\Sigma$ is balanced, each vertex of degree $d(v)>3$ is totally positive, and each vertex of degree $d(v)=3$ is totally positive or has exactly one positive edge, which is an isthmus.
\item $\Sigma$ is balanced, its negative subgraph is a vertex-disjoint union of paths and circles, and each endpoint $v$ of a negative edge is incident with at most one positive edge, which is an isthmus if $d_{\Sigma^-}(v)=2$.
\item Each vertex $v$ of degree $d(v)>3$ is totally positive, each vertex of degree $d(v)=3$ is totally positive or has exactly one positive edge, and after deleting all positive isthmi, the signed graph is balanced and the endpoints of every negative edge have degree at most $2$.
\end{enumerate}
\end{thm}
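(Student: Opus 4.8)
The plan is to work straight from the definition: $L_\sigma(\Sigma)$ is consistent exactly when every circle of the (multi)graph $L(\G)$ has positive vertex-sign product, and since a vertex of $L(\G)$ is an edge $e$ of $\G$ carrying the mark $\sigma(e)$, this says that every circle $D=g_1g_2\cdots g_m$ of $L(\G)$ (a cyclic list of distinct edges of $\G$ with consecutive ones sharing an endpoint) satisfies $\prod_i\sigma(g_i)=+$. I will prove that (i) is necessary and sufficient and then deduce the equivalence of (i)--(iii) by elementary local bookkeeping.

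The technical heart is a correspondence between circles of $L(\G)$ and closed walks of $\G$. For $D$ as above let $u_i$ be the endpoint shared by $g_i$ and $g_{i+1}$ (the copy selected by the circle when they are parallel). Then $g_i$ joins $u_{i-1}$ and $u_i$, so $u_m,u_1,u_2,\ldots,u_m$ is a closed walk traversing $g_1,\ldots,g_m$ in order. Call $g_i$ a \emph{pivot} if $u_{i-1}=u_i$ (equivalently $g_{i-1},g_i,g_{i+1}$ all pass through one vertex) and a \emph{travelling} edge otherwise. Deleting the pivot steps leaves a closed trail on the travelling edges, each used once and hence contributing even degree at every vertex, so it decomposes into edge-disjoint circles of $\G$. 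Thus if $\Sigma$ is balanced the travelling edges contribute $+$, giving $\sigma(D)=\prod_{g_i\text{ pivot}}\sigma(g_i)$; modulo balance, line-consistency becomes the statement that every circle of $L(\G)$ carries an even number of negative pivots.

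For necessity, sending each circle of $\G$ to its image in $L(\G)$ forces $\Sigma$ to be balanced. Three distinct edges at a common vertex form a triangle of $L(\G)$, so every three edges at a vertex have positive product; an easy two-line sign calculation then forces every vertex of degree $>3$ to be totally positive and every vertex of degree $3$ to be totally positive or to carry exactly one positive edge. If the unique positive edge $e$ at a degree-$3$ vertex $v$ lay on a circle $C$ of $\G$, I would route $C$ back to $v$ and append the third (negative) edge at $v$ as a pivot, producing a circle of $L(\G)$ of sign $\sigma(C)\cdot(-)=-$; hence $e$ must be an isthmus, establishing all of (i). For sufficiency, assume (i): digons of $L(\G)$ are positive by balance and triangles by the degree conditions, so take $m\ge4$ and use the formula. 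A negative pivot $g_i$ sits at a vertex $v$ of degree $\ge3$; degree $>3$ is impossible ($v$ would be totally positive), so $d(v)=3$ and $g_{i-1},g_i,g_{i+1}$ are precisely the three edges at $v$, one of which is the positive isthmus and it is not $g_i$. But an isthmus lies on no circle, so it can never be a travelling edge; the positive edge among $g_{i-1},g_{i+1}$ would then also be a pivot, forcing a fourth circle-edge through $v$ and contradicting $d(v)=3$. Hence no negative pivots occur and $L_\sigma(\Sigma)$ is consistent.

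Finally, (i)--(iii) are matched locally: under (i) every vertex has negative-degree at most $2$, so $\Sigma^-$ is a vertex-disjoint union of paths and circles and the endpoint conditions of (ii) read off directly, while (iii) follows since deleting positive isthmi preserves balance and lowers every negative-edge endpoint to degree $\le2$; conversely balance is recovered because isthmi lie on no circle, and the degree bound after deletion forces the lone positive edge at a degree-$3$ vertex to be deleted, i.e.\ to be an isthmus. I expect the main obstacle to be making the circle-to-walk correspondence and the pivot sign formula fully rigorous --- in particular the choice of $u_i$ on a double edge of $L(\G)$ and the degenerate case where the travelling trail is empty --- after which the structural deductions are short.
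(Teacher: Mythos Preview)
Your argument is correct and follows essentially the same route as the paper's: both proofs show that the sign of a circle in $L(\G)$ factors into a ``closed-walk'' contribution (handled by balance) and contributions from edges where three consecutive $g_{i-1},g_i,g_{i+1}$ meet at one vertex, and both rule out a negative such $g_i$ by the same contradiction (the positive isthmus among $g_{i\pm1}$ forces a fourth distinct circle-edge through a degree-$3$ vertex). The only difference is organizational---the paper shortens the circle inductively by deleting positive ``pivots'' until it becomes $L(W)$ for a closed walk, whereas you write down the pivot/travelling decomposition and the sign formula $\sigma(D)=\prod_{\text{pivots}}\sigma(g_i)$ at the outset---but the underlying ideas and the equivalence of (i)--(iii) are handled identically.
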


\begin{proof}
The equivalence of (i) and (ii) is obvious, so we give short proofs of the necessity and sufficiency of (i) and the equivalence of (ii) and (iii).

To prove necessity of (i), consider the possibilities.  
If a vertex $v$ has either three negative edges, or two positive edges and one negative edge, these edges form a negative triangle in the line graph.  This implies most of (i).  We have to prove that, if $v$ is incident with negative edges $e$ and $e'$ and a positive edge $f$ (and no other edges), then $f$ is an isthmus.  

If not, there is a circle $C$ on $f$ which must contain $e$ or $e'$; let us say $e$.  In the line graph $C$ generates a circle $L(C)$, which is positive because its vertex sign product equals the edge sign product of $C$.  However, in the line graph there is another circle that interposes $e'$ between $e$ and $f$.   This circle is negative.  Hence, $f$ must be an isthmus.  Thus, (i) is necessary for line consistency.

Now we prove sufficiency.  A \emph{digon} is a circle of length 2.  A \emph{vertex triangle} is a circle of length 3 in $L(\G)$ whose vertices are three edges that are incident with a single vertex in $\G$.

A circle $C$ in $L_\sigma(\Sigma)$ has the form $e_le_1\cdots e_{l-1}e_l$ where $l \geq 2$.  A digon or vertex triangle in $L_\sigma(\Sigma)$ is obviously positive.  Any other triangle comes from a triangle in $\Sigma$, so is also positive.  Thus, we may assume $l\geq 4$ and that any shorter circle is positive.

Note that an isthmus $e=uv$ of $\G$ that is not a pendant edge is a cutpoint of $L(\G)$, separating the other edges incident with $u$ from those incident with $v$.  The only way it can belong to a circle in $L(\G)$ is for the preceding and following edges of the circle to be incident with the same endpoint of $e$.

Suppose consecutive edges $e_{i-1}e_ie_{i+1}$ are incident with $v$.  If $e_i$ is positive, $C$ can be shortened by omitting it without changing the sign of the circle.  If $e_i$ is negative, since $d(v)=3$ one of the edges is a positive isthmus, say $e_{i+1}$.  Because $e_{i+1}$ is an isthmus, $e_{i+2}$ must be incident with $v$ as well, which is impossible because $d(v)=3$ and (since $l\geq4$) $e_{i+2}\neq e_{i-1}$.  Therefore, we may assume no three consecutive edges in $C$ are incident with the same vertex.  That means $C = L(W)$ for a closed walk $W$ in $\Sigma$.  But every closed walk is positive because of balance.  Thus, $C$ is positive.

That (ii) implies (iii) is obvious.  Assume (iii); we deduce (ii).  $\Sigma$ is balanced because isthmi do not affect balance.  Also, $\Sigma^-$ has maximum degree at most 2, so it is a disjoint union of paths and circles.  Let $\Sigma'$ be $\Sigma$ without its positive isthmi.  If $d_\Sigma(v)=3$ and $v$ has one positive edge $f$ and two negative edges $e_1, e_2$, then $d_{\Sigma'}(v)\leq2$ so $f$ must be an isthmus.
\end{proof}

It is interesting to see what the theorems say about relatively well connected graphs.

\begin{cor}\label{C:3conn}
Let $\Sigma$ of order at least $4$ be $3$-connected, or just edge $2$-connected without divalent vertices.  Then $\Sigma$ is line consistent if and only if it is all positive.
\hfill\qedsymbol
\end{cor}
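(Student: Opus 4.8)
The plan is to deduce the corollary directly from condition~(i) of Theorem~\ref{T:characterization}. The whole content lies in a single observation: each of the two connectivity hypotheses forces both minimum degree at least~$3$ and the absence of isthmi.

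First I would dispose of the trivial direction. If $\Sigma$ is all positive then it is balanced and every vertex is totally positive, so condition~(i) is satisfied automatically and $\Sigma$ is line consistent; this uses no connectivity at all.

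For the converse I would begin by recording the structural consequences of the hypotheses. Recall Whitney's inequalities: for any graph the vertex connectivity is at most the edge connectivity, which in turn is at most the minimum degree. Hence $3$-connectivity yields edge connectivity at least~$3$ and minimum degree at least~$3$; and edge $2$-connectivity yields edge connectivity at least~$2$, which together with the exclusion of divalent vertices pushes the minimum degree up from~$2$ to~$3$. In either case $\Sigma$ has minimum degree at least~$3$, and, its edge connectivity being at least~$2$, it contains no isthmus.

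Finally I would invoke Theorem~\ref{T:characterization}(i). Because there are no isthmi, the exceptional case in~(i)---a degree-$3$ vertex carrying a single positive edge that happens to be an isthmus---cannot occur, so every vertex of degree~$3$ is totally positive; every vertex of higher degree is totally positive by the same criterion; and minimum degree at least~$3$ excludes vertices of degree~$1$ or~$2$ entirely. Thus every vertex is totally positive, whence every edge, lying between two totally positive endpoints, is positive. The only step needing any care is the translation of each connectivity hypothesis into ``minimum degree at least~$3$ and no isthmi,'' and even that is routine; once it is in hand the conclusion drops straight out of Theorem~\ref{T:characterization}(i), so I anticipate no genuine obstacle.
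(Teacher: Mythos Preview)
Your proposal is correct and follows exactly the route the paper intends: the corollary is stated with only a \qedsymbol, signaling that it is an immediate consequence of Theorem~\ref{T:characterization}(i), and your argument simply spells out that immediacy. The one-line heart of the matter---each hypothesis forces minimum degree at least~$3$ and forbids isthmi, whereupon (i) leaves no room for a negative edge---is precisely what the authors are leaving to the reader.
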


Although (ii) and (iii) in the theorem are simple restatements of (i), their different points of view are suggestive.  Parts (ii) and (iii) suggest constructions for line-consistent signed graphs, for which see \cite{CNV}.  

Part (ii) can be interpreted as a structural description of a line-consistent signed graph $\Sigma$.  (Recall that we forbid loops.)  
A \emph{block} is a graph that is connected and has no cutpoint.  A block is \emph{nontrivial} if it contains a circle; thus, the trivial blocks are the isthmi and isolated vertices.  A \emph{block of $\G$} is a maximal block subgraph of $\G$; equivalently, it is a maximal connected subgraph that is not separated by any cutpoint of $\G$.  
We apply the same terminology to $\Sigma$ as to its underlying graph.  

\begin{thm}\label{C:struct}
A signed graph $\Sigma$ is line consistent if and only if it is balanced and has the following form:
\begin{enumerate}[{\rm\ (1)}]
\item Each component of $\Sigma^-$ is a circle, a nontrivial path, or a single vertex.
\item A circle component of $\Sigma^-$ is a block of $\Sigma$ and each of its vertices is incident with at most one other edge, which must be a positive isthmus.
\item A nontrivial path component $P$ of $\Sigma^-$ either is an induced subgraph of $\Sigma$, or is all but one edge of a circle that is a block of $\Sigma$ whose remaining edge is positive.  The endpoints of $P$ are at most divalent.  Each internal vertex of $P$ is incident with at most one other edge, which must be a positive isthmus.  Furthermore, $P$ either
\begin{enumerate}[{\rm(a)}]
\item  is part of a nontrivial block of $\Sigma$ (then its endpoints are necessarily divalent); or
\item  is composed entirely of isthmi and its endpoints are not incident with any nontrivial block of $\Sigma$ (then the second edge, if any, incident with an endpoint is necessarily a positive isthmus).
\end{enumerate}
\end{enumerate}
\end{thm}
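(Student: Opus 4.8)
The plan is to prove that the structural description is equivalent to condition~(ii) of Theorem \ref{T:characterization}; since (ii) was already shown to be necessary and sufficient for line consistency, this yields the result. The work is therefore purely structural: I would translate the local, vertex-by-vertex conditions of (ii) into the global block-theoretic statements (1)--(3), and conversely. Balance carries over verbatim in both directions, so I would concentrate on the negative subgraph $\Sigma^-$ and the positive edges attached to it.

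For the forward direction I would assume (ii) and derive (1)--(3). Condition (1) is immediate, since (ii) asserts that $\Sigma^-$ is a vertex-disjoint union of paths and circles. The crucial observation, used repeatedly, is that every positive edge which (ii) permits at a vertex $v$ with $d_{\Sigma^-}(v)=2$ is an \emph{isthmus}, a bridge lying in no circle. Hence a circle component $Z$ of $\Sigma^-$ has all of its extra edges bridges; since a circle has no cutpoint and a bridge lies in no nontrivial block, $Z$ is a maximal connected subgraph without a cutpoint, that is, a block, giving (2). For a nontrivial path component $P$ I would treat its endpoints ($d_{\Sigma^-}=1$, hence at most one positive edge with no isthmus requirement, so at most divalent) separately from its internal vertices ($d_{\Sigma^-}=2$, hence at most one extra edge, a positive isthmus); this yields the degree bounds and the internal-vertex clause of (3).

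The heart of the argument is the two dichotomies in (3). For the induced/non-induced split I would argue that if $P$ is not induced, then an edge joining two non-consecutive vertices of $P$ cannot meet an internal vertex (whose only non-$P$ edge is an isthmus, which cannot close a circle) and cannot be negative (that would raise a $\Sigma^-$-degree and change the component), so it must be a positive edge joining the two endpoints; the ``at most one positive edge'' clause then forces it to be the \emph{unique} such edge, so $P$ together with it is a circle, and the bridge argument again makes that circle a block. For the (a)/(b) split I would observe that a circle through any edge of $P$ cannot leave $P$ through an internal vertex, hence must traverse all of $P$ and close through the endpoints; so either some (and then every) edge of $P$ lies on a circle, forcing $P$ into a nontrivial block with divalent endpoints (case (a)), or no edge of $P$ lies on a circle, so $P$ consists entirely of isthmi and its endpoints meet no nontrivial block (case (b)).

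The reverse direction is a routine verification: assuming (1)--(3) one reads off that $\Sigma^-$ is a disjoint union of paths and circles and that each endpoint of a negative edge carries at most one positive edge, an isthmus exactly when $d_{\Sigma^-}=2$, which is (ii). I expect the main obstacle to be the maximality arguments that identify the relevant circles as genuine \emph{blocks}, together with the argument that a non-induced path closes through a single chord rather than a longer positive detour; both rest delicately on the isthmus and ``at most one positive edge'' clauses of (ii), and some care is needed to treat multiple edges correctly (for instance a length-one path $P$ whose endpoints are joined by a parallel positive edge, so that $P$ plus that edge is a digon).
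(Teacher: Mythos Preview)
Your proposal is correct and follows essentially the same approach as the paper: both reduce to condition~(ii) of Theorem~\ref{T:characterization} and exploit the key observation that any extra edge at a vertex with $d_{\Sigma^-}(v)=2$ is an isthmus, hence cannot lie on a circle, which forces any circle meeting $P$ to traverse all of $P$. The only difference is organizational: the paper splits first on whether $P$ lies in a nontrivial block and then reads off the induced/non-induced dichotomy from whether the two positive edges at the endpoints coincide, whereas you handle the induced/non-induced split first and then the block/isthmus split; the underlying reasoning is the same.
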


\begin{proof}
The form is stronger than the characterization in (ii), so it implies line consistency.  We verify the converse in stages.

The characterization of an all-negative circle follows directly from (ii).  

Consider a nontrivial path component $P$ of the negative subgraph.  

If $P$ is contained in a nontrivial block $B$, each endpoint must be incident with a second edge, which is positive.  Suppose the second edge at both endpoints is the same edge $e$; then $P \cup \{e\}$ is a circle.  No other edge can be incident with the endpoints of $P$, but an internal vertex can be incident with one more edge, which can only be an isthmus.  If the positive edges at the endpoints are distinct, then $P$ is induced because, again, any third edge at an internal vertex is an isthmus, therefore has its other endpoint off $P$.

If $P$ is not contained in a nontrivial block, it is composed entirely of isthmi.  Suppose it were not; then it would have a vertex $v$ that is in a nontrivial block $B$ and is incident with an edge of $P$ that is not in $B$.  The degree of $v$ in $B$ is at least 2; therefore, $d(v)=3$ and by (ii) the third edge at $v$ is a positive isthmus.  However, that leaves only one edge at $v$ that can be in $B$, an impossibility.  Therefore, if $P$ is not entirely within a nontrivial block, its every edge is an isthmus.  By similar reasoning a second edge at an endpoint of $P$ must be an isthmus.
\end{proof}

Acharya, Acharya, and Sinha \cite{AASC} examined a similar problem, where the line graph has edge as well as vertex signs derived from $\Sigma$.  The question is whether the product of edge signs and vertex signs on each circle is the same---this property is called \emph{harmony}.  In \cite{AASC} the edge signs are those of the Behzad--Chartrand line graph \cite{BC}.  Other definitions of a signed line graph (such as that in \cite[Sect.\ 5.2]{MTS}) could be considered, though if the edge signature is balanced (as in the $\times$-line signed graph of M.\ Acharya \cite{TimesLG}) the answer is the same as in Theorem \ref{T:AAS} since switching (defined in, e.g., \cite{MTS}) the line graph's edge signature does not alter the characterization of harmony.

%%%%%%%%%%%%%%%%%%%%%%%%


\begin{thebibliography}{99}

\bibitem{A} B.\ Devadas Acharya, 
A characterization of consistent marked graphs.  
\emph{Nat.\ Acad.\ Sci.\ Letters (India)} 6 (1983), 431--440.  
Zbl 552.05052.  %MR none.

\bibitem{A2} B.\ Devadas Acharya, 
Some further properties of consistent marked graphs.  
\emph{Indian J.\ Pure Appl.\ Math.}\ 15 (1984), 837--842.  
MR 86a:05101.  Zbl 552.05053.

\bibitem{AASC} Belmannu Devadas Acharya, Mukti Acharya, and Deepa Sinha, 
Cycle-compatible signed line graphs.  
\emph{Indian J.\ Math.}\ 50 (2008), no.\ 2, 407--414.
MR 2517744 (2010h:05142).  Zbl 1170.05032.

\bibitem{AAS} B.\ Devadas Acharya, Mukti Acharya, and Deepa Sinha,  
Characterization of a signed graph whose signed line graph is $S$-consistent.  
\emph{Bull.\ Malaysian Math.\ Sci.\ Soc.} (2) 32 (2009), no.\ 3, 335--341.
MR 2562172 (2010m:05135).  Zbl 1176.05032.

\bibitem{TimesLG} Mukti Acharya, 
$\times$-line signed graphs.  
\emph{J.\ Combin.\ Math.\ Combin.\ Comput.}\ 69 (2009), 103--111.
MR 2517311.  Zbl 1195.05031.

\bibitem{BC} M.\ Behzad and G.\ Chartrand, 
Line-coloring of signed graphs.  
\emph{Elem.\ Math.}\ 24 (1969), 49--52.
MR 39 \#5415.  Zbl 175, 503 (e: 175.50302).

\bibitem{BH} Lowell W.\ Beineke and Frank Harary, 
Consistent graphs with signed points.  
\emph{Riv.\ Mat.\ Sci.\ Econom.\ Social.}\ 1 (1978), 81--88.  
MR 81h:05108.  Zbl 493.05053.

\bibitem{NB} F.\ Harary, 
On the notion of balance of a signed graph. 
\emph{Michigan Math.\ J.}  2 (1953--54), 143--146 and addendum preceding p.\ 1.
MR 16, 733h.  Zbl 56, 421c (e: 056.42103).

\bibitem{H} Cornelis Hoede, 
A characterization of consistent marked graphs.  
\emph{J.\ Graph Theory} 16 (1992), 17--23.  
MR 93b:05141.  Zbl 748.05081.

\bibitem{JSD} Manas Joglekar, Nisarg Shah, and Ajit A.\ Diwan,
Balanced group labeled graphs.  
\emph{Discrete Math.}\ 312 (2012), no.\ 9, 1542--1549.
MR 2899887.  Zbl 1239.05162.

\bibitem{R} S.B.\ Rao, 
Characterizations of harmonious marked graphs and consistent nets.  
\emph{J.\ Combin.\ Inform.\ System Sci.}\ 9 (1984), 97--112.
MR 89h:05048.  Zbl 625.05049. 

\bibitem{RX} Fred S.\ Roberts and Shaoji Xu, 
Characterizations of consistent marked graphs.  
\emph{Discrete Appl.\ Math.}\ 127 (2003), 357--371.
MR 1984094 (2004b:05097).  Zbl 1026.05054.

\bibitem{MTS} Thomas Zaslavsky, 
Matrices in the theory of signed simple graphs.  
In: \emph{Advances in Discrete Mathematics and Applications:\ Mysore, 2008}, B.D.\ Acharya, G.O.H.\ Katona, and J.\ Nesetril (Eds.), pp.\ 207--229.  
Ramanujan Math.\ Soc.\ Lect.\ Notes Ser., No.\ 13.  
Ramanujan Math.\ Soc., Mysore, India, 2010.  
MR 2766941 (2012d:05017).  Zbl 1231.05120.

\bibitem{CNV} Thomas Zaslavsky, 
Consistency in the naturally vertex-signed line graph of a signed graph.  
\emph{Bull.\ Malaysian Math.\ Sci.\ Soc.}, to appear. 
%MR .  Zbl .

\end{thebibliography}
\end{document}